\theoremstyle{plain}
\newtheorem{thm}{\indent Theorem}
\newtheorem{cor}{\indent Corollary}
\newtheorem{lem}{\indent Lemma}
\theoremstyle{remark}
\theoremstyle{definition}
\newtheorem{defn}{\indent Definition}
\theoremstyle{definition}
\newtheorem{ex}{\indent \it Example}
\theoremstyle{definition}
\newtheorem{assumption}{\indent \it Assumption}
\theoremstyle{definition}
\newtheorem*{PRalgorithm}{\indent PR algorithm}
\theoremstyle{plain}
\newtheorem*{chenthm}{\indent Chen's Theorem}
\newcommand{\A}{\mathscr{A}}
\newcommand{\U}{\mathscr{U}}
\newcommand{\Ubar}{\overline{\mathscr{U}}}
\newcommand{\Y}{\mathscr{Y}}
\newcommand{\FF}{\mathbb{F}}
\newcommand{\MM}{\mathbb{M}}
\newcommand{\RR}{\mathbb{R}}
\newcommand{\E}{\mathsf{E}}
\newcommand{\grad}{\nabla}
\newcommand{\eps}{\varepsilon}
\renewcommand{\phi}{\varphi}
\begin{document}

\title{Convergence rate for predictive recursion estimation of finite mixtures}
\author{
Ryan Martin \\ 
Department of Mathematics, Statistics, and Computer Science \\ 
University of Illinois at Chicago \\ 
\url{rgmartin@math.uic.edu} 
}
\date{\today}

\maketitle

\begin{abstract}
Predictive recursion (PR) is a fast stochastic algorithm for nonparametric estimation of mixing distributions in mixture models.  It is known that the PR estimates of both the mixing and mixture densities are consistent under fairly mild conditions, but currently very little is known about the rate of convergence.  Here I first investigate asymptotic convergence properties of the PR estimate under model misspecification in the special case of finite mixtures with known support.  Tools from stochastic approximation theory are used to prove that the PR estimates converge, to the best Kullback--Leibler approximation, at a nearly root-$n$ rate.  When the support is unknown, PR can be used to construct an objective function which, when optimized, yields an estimate the support.  I apply the known-support results to derive a rate of convergence for this modified PR estimate in the unknown support case, which compares favorably to known optimal rates.  

\medskip

\emph{Keywords and phrases:} Density estimation; Kullback--Leibler divergence; Lyapunov function; mixture model; stochastic approximation.  
\end{abstract}

\section{Introduction}
\label{S:intro}

Nonparametric estimation of mixing distributions is an important and challenging problem in statistics.  Recent progress along these lines has been made with the fast stochastic \emph{predictive recursion} (PR) algorithm due to \citet{nqz} and \citet{newton02}.  PR is fundamentally different from existing algorithms, such as EM, in a number of ways.  Most importantly, PR is not a hill-climbing algorithm.  Instead, it learns sequentially like stochastic approximation \citep{robbinsmonro, kushner}.  In addition, PR is able to estimate a mixing density with respect to any user-defined dominating measure.  That is, unlike the nonparmetric maximum likelihood estimate, which is almost surely discrete \citep{lindsay1995}, the PR estimate can be discrete, continuous, or both, depending on the user's choice of dominating measure.  

Theoretically, it has been shown that the PR estimates of both the mixing and mixture densities are consistent under certain conditions; see Section~\ref{S:pr} for more details.  The goal of this note is to investigate the rate of convergence, about which very little is known.  For this, we shall explore further the connection between PR and stochastic approximation developed in \citet{martinghosh}.  To the author's knowledge, results on the rate of convergence for general stochastic approximations are only fully developed in the finite-dimensional context.  Therefore, we shall confine ourselves here to an analysis of PR when the possibly misspecified model assumes that the data-generating distribution is a finite mixture with known support.  In this case, we prove that the PR estimate of the mixing distribution converges almost surely at a nearly parametric root-$n$ rate, where the limit is characterized by the mixture model closest to the true data-generating distribution based on the Kullback--Leibler divergence.  This result also sheds light on how one should choose PR's tuning parameter in practical applications.  

The PR algorithm itself is not naturally suited for the case when the support of the finite mixture model is unknown.  But, by applying the general principle in \citet{mt-prml}, I show that PR yields a sort of objective function which can be optimized to estimate the unknown support.  I apply the paper's known-support results to establish rates of convergence for this new PR-based unknown-support procedure.  Two numerical examples are given to illustrate the method; for more examples and the full computational details, the reader is referred to \citet{prml-finite}.

\section{Predictive recursion}
\label{S:pr}

Suppose independent data $Y_1,\ldots,Y_n$ are available from a distribution with unknown density $m(y)$, which we model as a nonparametric mixture:
\begin{equation}
\label{eq:mixture}
m_f(y) = \int_{\U} p(y \mid u) f(u) \,d\mu(u), \quad y \in \Y, 
\end{equation}
where $(y,u) \mapsto p(y \mid u)$ is a known kernel on $\Y \times \U$ and $f \in \FF$ is unknown and to be estimated.  Here $\FF = \FF(\U,\mu)$ is the set of all densities with respect to a given $\sigma$-finite Borel measure $\mu$ on $\U$.  \citet{newton02} presents the following algorithm for nonparametric estimation of $f$ and $m_f$ based on $Y_1,\ldots,Y_n$.  

\begin{PRalgorithm}
Choose a density $f_0 \in \FF$ and a sequence of weights $\{w_i: i \geq 1\} \subset (0,1)$.  Then, for $i=1,\ldots,n$, compute $m_{i-1}(y) = m_{f_{i-1}}(y)$ and 
\begin{equation}
\label{eq:recursion}
f_i(u) = (1-w_i) f_{i-1}(u) + w_i p(Y_i \mid u) f_{i-1}(u) \, / \, m_{i-1}(Y_i).
\end{equation}
Return $f_n(u)$ and $m_n(y)=m_{f_n}(y)$ as estimates of $f(u)$ and $m_f(y)$, respectively.  
\end{PRalgorithm}

PR has some interesting connections to the nonparametric Bayes estimate in the case where the unknown mixing distribution is modeled as a random draw from the Dirichlet process distribution.  \citet{mt-prml} take advantage of this connection to motivate a PR-based semiparametric mixture model analysis where an additional unknown structural parameter is estimated by maximizing a PR-induced approximate marginal likelihood.  \citet{mt-test} use this general strategy to develop a PR-based methodology for large-scale nonparametric empirical Bayes multiple testing.  In Section~\ref{S:prml} I apply this method to mixtures with unknown support.  

Asymptotic convergence properties of the PR estimates $f_n$ and $m_n$ have only recently become available.  Let $\MM$ denote the set of mixture densities $m_f$ as $f$ ranges over $\FF$.  \citet{tmg} build on the work of \cite{ghoshtokdar} to show that when the mixture model is correctly specified (i.e., $m \in \MM$), then both $f_n$ and $m_n$ converge almost surely to $f$ and $m_f$ in their respective topologies.  \citet{mt-rate} go one step further, showing that if $m \not\in \MM$, then $m_n$ converges to the closest mixture density $m_{f^\star} \in \MM$ as measured by the Kullback--Leibler divergence.  As a corollary, if $f$ is identifiable in the postulated mixture model, then $f_n$ converges almost surely to $f^\star$ in the weak topology.  They also establish a bound on the rate of convergence for $m_n$ in terms of the PR weight sequence $\{w_n\}$.  For weights of the form $w_i = (i+1)^{-\gamma}$, for suitable $\gamma$, \citet{mt-rate} obtain a $n^{-1/6}$ bound on the Hellinger convergence rate of $m_n$ to $m_{f^\star}$ for a wide class of kernels $p(y \mid u)$.  While this rate is comparable to the rate obtained in \citet{genovese2000}, it leaves a lot to be desired.  In fact, simulations in \citet{mt-rate} suggest that the upper bound corresponds to a ``worst case scenario'' rate of convergence, i.e., when $f^\star$ sits on the boundary of $\FF$.  I expect that a nearly parametric root-$n$ rate for $m_n$, like that obtained by \citet{ghosalvaart2001}, can be achieved by PR, at least in some cases.  In Section~\ref{S:rate} we show that this conjecture holds in the special known finite support case.

\section{Asymptotics for PR with known support}
\label{S:rate}

Assume that the true density $m$ is modeled as a finite mixture.  That is, $\U$ is a finite set of size $s$ and $\mu$ is counting measure.  In this case, $\FF$ denotes the $(s-1)$-dimensional probability simplex, and I write $f = \{f(u): u \in \U\}$.  Then $m_f(y) = \sum_{u \in \U} p(y \mid u) f(u)$.  Throughout, all $s$-dimensional vectors $x$ will be indexed by $\U$, i.e., $x = \{x(u): u \in \U\}$.  Also, $\langle \cdot, \cdot \rangle$ denotes the usual inner-product and $\|\cdot\|$ the corresponding norm.  

We begin by listing two basic assumptions about the mixture model.  

\begin{assumption}
\label{as:kernel}
$u \mapsto p(y \mid u)$ is continuous for each $y \in \Y$.    
\end{assumption}

\begin{assumption}
\label{as:identifiable}
$f$ is identifiable in model \eqref{eq:mixture}, i.e., $f \mapsto m_f$ is one-to-one.  
\end{assumption}

For any density $m'$ on $\Y$, define the Kullback--Leibler divergence of $m'$ from $m$ as $K(m,m') = \int \log\{m(y)/m'(y)\} m(y) \,dy$.  Henceforth, I shall silently assume that $K(m,m') < \infty$ for all $m' \in \MM$.  Then the infimum 
\[ K^\star = \inf\{K(m,m_f): f \in \FF\}, \]
is finite.  It follows from Assumption~\ref{as:kernel} that there exists an $f^\star$ in the closure of $\FF$ such that $K(m,m_{f^\star}) = K^\star$; see Lemma~3.1 of \citet{mt-rate}.  Assumption~\ref{as:identifiable} ensures that $f^\star$ is unique.  Allowing the model to be misspecified is particularly important here, given that the assumption of known finite support is rather strong.  For example, even if the support $\U$ is unknown, the results that follow show that PR does as well asymptotically as could be hoped for if we simply guess at what $\U$ should be.  

Following \citet{martinghosh}, express the PR update $f_{n-1} \mapsto f_n$, $n \geq 1$, as follows:
\begin{equation}
\label{eq:pr1}
f_n(u) = f_{n-1}(u) + w_n \Phi(Y_n, f_{n-1})(u),  \quad u \in \U, 
\end{equation}
where, for generic $y \in \Y$ and $f \in \FF$, the mapping $\Phi(y, f)$ is defined as 
\[ \Phi(y, f)(u) = f(u) \Bigl\{ \frac{p(y \mid u)}{m_f(y)} - 1 \Bigr\}. \]
Equation \eqref{eq:pr1} shows that PR is a special case of a general Robbins--Monro type of stochastic approximation algorithm designed to find roots of the mapping 
\begin{equation}
\label{eq:phimap}
\phi(f)(u) = f(u) \Bigl\{ \int \frac{p(y \mid u)}{m_f(y)} m(y) \,dy - 1 \Bigr\}, \quad f \in \FF, \quad u \in \U. 
\end{equation}
This $\phi(f)$ is nothing but the conditional expectation of $\Phi(Y_n, f_{n-1})$, under the true density $m$, given $f_{n-1}$ equals $f$.  The following result is an immediate consequence of the definitions and construction above.

\begin{lem}
\label{lem:martingale}
The sequence $Z_n(u)$, for $u \in \U$, given by 
\begin{equation}
\label{eq:martingale}
Z_n(u) = \Phi(Y_n, f_{n-1})(u) - \phi(f_{n-1})(u), 
\end{equation}
is a martingale difference sequence with respect to the $\sigma$-algebra $\A_n$ generated by $Y_1,\ldots,Y_n$.  Moreover, $\|Z_n\|^2$ is bounded for all $n \geq 1$.  
\end{lem}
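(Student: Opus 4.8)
The plan is to establish the two assertions separately, each by a short direct computation. For the martingale difference property, I would first observe that $f_{n-1}$ is a deterministic function of $Y_1,\dots,Y_{n-1}$, hence $\A_{n-1}$-measurable, so $Z_n$, being built from $f_{n-1}$ and $Y_n$, is $\A_n$-measurable. Since $Y_n$ is independent of $\A_{n-1}$ with density $m$, conditioning on $\A_{n-1}$ and using the measurability of $f_{n-1}$ gives, for each $u \in \U$,
\[
\E[\Phi(Y_n, f_{n-1})(u) \mid \A_{n-1}] = \int \Phi(y, f_{n-1})(u)\, m(y)\,dy = \phi(f_{n-1})(u),
\]
where the last equality is exactly the definition of $\phi$ in \eqref{eq:phimap}. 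Hence $\E[Z_n(u)\mid\A_{n-1}] = 0$ for every $u$, and integrability of $Z_n$ will follow from the boundedness claim.

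For the bound on $\|Z_n\|^2$, the key observation is that, for any $f \in \FF$ and any $y$ with $m_f(y) > 0$, the quantity $p(y\mid u)f(u)$ is one of the nonnegative summands making up $m_f(y) = \sum_{u'\in\U} p(y\mid u')f(u')$, so $0 \le p(y\mid u)f(u)/m_f(y) \le 1$. Consequently
\[
\Phi(y, f)(u) = \frac{p(y\mid u)f(u)}{m_f(y)} - f(u) \in [-f(u),\, 1 - f(u)] \subseteq [-1, 1]
\]
uniformly in $y$, $f$, and $u$. Because $\U$ has only $s$ elements, this yields $\|\Phi(y,f)\|^2 \le s$, and likewise $|\phi(f)(u)| \le \int |\Phi(y,f)(u)|\,m(y)\,dy \le 1$, so $\|\phi(f)\|^2 \le s$ as well. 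The triangle inequality then gives $\|Z_n\| \le \|\Phi(Y_n, f_{n-1})\| + \|\phi(f_{n-1})\| \le 2\sqrt{s}$, i.e., $\|Z_n\|^2 \le 4s$ for every $n \geq 1$.

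The only point requiring a little care is that $\Phi$, and hence $Z_n$ and the recursion \eqref{eq:recursion} itself, are well-defined only when the denominators $m_{n-1}(Y_n)$ are strictly positive; this is guaranteed once the initial density $f_0$ is chosen with full support on $\U$, which forces $f_{n-1}(u) > 0$ for all $u$ and $n$, and then $m_{n-1}(y) > 0$ for $m$-almost every $y$ under Assumption~\ref{as:kernel} together with the standing convention $K(m, m_f) < \infty$. Beyond this bookkeeping I do not anticipate any real obstacle: the lemma is an essentially immediate consequence of the structure of $\Phi$ and the finiteness of $\U$.
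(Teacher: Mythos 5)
Your proof is correct and matches the paper's reasoning: the paper states the lemma as an immediate consequence of the construction, with the key fact being precisely the one you verify, namely that $\phi(f)$ is the conditional expectation of $\Phi(Y_n,f_{n-1})$ given $f_{n-1}=f$, while boundedness follows from $0 \leq p(y\mid u)f(u)/m_f(y) \leq 1$ and the finiteness of $\U$. Your added care about positivity of $m_{n-1}(Y_n)$ (full-support $f_0$, $w_n<1$, and $K(m,m_f)<\infty$) is a reasonable piece of bookkeeping the paper leaves implicit.
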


According to stochastic approximation theory \citep[e.g.,][]{kushner}, convergence properties of $f_n$, as $n \to \infty$, can be found by investigating the asymptotic behavior of solutions of an appropriate ordinary differential equation (ODE).  Specifically, let $\{f^t: t \geq 0\}$ denote a generic trajectory in $\FF$.  Then the limiting behavior of solutions $f^t$ of the ODE $df^t/dt = \phi(f^t)$, as $t \to \infty$, can be used to study the limiting behavior of the PR sequence $f_n$, as $n \to \infty$.  For this purpose, I will need some basic definitions and results from the theory of ODEs.  

\begin{lem}
\label{lem:equilibrium}
The mixing distribution $f^\star$ is an equilibrium point of the ODE $df^t/dt = \phi(f^t)$; in other words, $\phi(f^\star)(u) = 0$ for all $u$.  
\end{lem}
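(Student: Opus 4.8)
The plan is to show that $\phi(f^\star)(u) = 0$ for every $u \in \U$ by exploiting the variational characterization of $f^\star$ as the minimizer of $f \mapsto K(m, m_f)$ over the closure of $\FF$. Writing out the divergence, $K(m, m_f) = \int \log m(y)\, m(y)\,dy - \int \log m_f(y)\, m(y)\,dy$, so minimizing $K(m, m_f)$ is equivalent to maximizing the concave functional $L(f) = \int \log m_f(y)\, m(y)\,dy$ over the simplex $\FF$. The key computation is the directional derivative: for a fixed $f$ in the interior of $\FF$ and any direction $g - f$ with $g \in \FF$, a routine differentiation under the integral sign gives
\[
\frac{d}{d\eps} L\bigl((1-\eps)f + \eps g\bigr)\Big|_{\eps=0^+} = \int \frac{m_g(y) - m_f(y)}{m_f(y)}\, m(y)\,dy = \sum_{u \in \U} \bigl(g(u) - f(u)\bigr) \Bigl\{ \int \frac{p(y\mid u)}{m_f(y)}\, m(y)\,dy - 1 \Bigr\}.
\]
Note that the bracketed quantity is precisely $\phi(f)(u)/f(u)$, i.e. the directional derivative of $L$ at $f$ in the direction of the $u$-th vertex $e_u$ of the simplex (relative to $f$) equals $\int p(y\mid u)/m_f(y)\, m(y)\,dy - 1$.

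Next I would apply the first-order optimality conditions for maximizing the concave function $L$ over the simplex $\FF$ at the point $f^\star$. These Karush--Kuhn--Tucker / Kiefer--Wolfowitz-type conditions state that $\int p(y\mid u)/m_{f^\star}(y)\, m(y)\,dy - 1 \le 0$ for all $u \in \U$, with equality whenever $f^\star(u) > 0$. Multiplying through by $f^\star(u) \ge 0$, this yields
\[
\phi(f^\star)(u) = f^\star(u) \Bigl\{ \int \frac{p(y\mid u)}{m_{f^\star}(y)}\, m(y)\,dy - 1 \Bigr\} = 0 \quad \text{for all } u \in \U,
\]
since either $f^\star(u) = 0$ (making the product zero outright) or $f^\star(u) > 0$ (in which case the bracket vanishes). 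This is exactly the claim that $f^\star$ is an equilibrium point of the ODE $df^t/dt = \phi(f^t)$.

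The main subtlety to handle carefully is that $f^\star$ may lie on the boundary of $\FF$ (the excerpt explicitly flags this: $f^\star$ is in the closure of $\FF$, and the text elsewhere refers to $f^\star$ sitting on the boundary as a worst case). The differentiation-under-the-integral-sign step requires a dominating integrable bound; under Assumption~\ref{as:kernel} and the standing assumption $K(m, m_f) < \infty$ for all $f$, together with the fact that $m_{f^\star}(y) \ge f^\star(u) p(y\mid u)$ is bounded below along any trajectory approaching from the interior, one gets the needed domination so that the one-sided derivative exists and the interchange is valid. The other point worth a sentence is justifying that the infimum $K^\star$ is attained and that the optimality conditions take the stated form — but this is supplied by Lemma~3.1 of \citet{mt-rate}, cited in the excerpt, so I would simply invoke it. With those two caveats addressed, the equilibrium identity follows directly from the stationarity of the log-likelihood functional at its maximizer.
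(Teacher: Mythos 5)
Your proof is correct, and it rests on the same underlying idea as the paper's: first-order optimality of $f^\star$ for $f \mapsto K(m,m_f)$ over the simplex forces $\int p(y\mid u)/m_{f^\star}(y)\,m(y)\,dy - 1 \le 0$ for every $u$, and a second observation upgrades this to equality wherever $f^\star(u)>0$. The difference is in how each step is packaged. The paper simply cites Lemma~3.3 of \citet{mt-rate} for the inequality (which is exactly the directional-derivative computation you carry out by hand) and then closes the argument with the identity $\sum_u \phi(f^\star)(u) = \int \{m_{f^\star}(y)/m_{f^\star}(y)\} m(y)\,dy - 1 = 0$, so that a sum of nonpositive terms vanishing forces each term to vanish. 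You instead invoke the Kiefer--Wolfowitz/KKT conditions with complementary slackness; note that the fact that the threshold in those conditions is exactly $1$ (equivalently, that the Lagrange multiplier for $\sum_u f(u)=1$ equals $1$) is itself usually established by the very summation identity the paper uses, so a fully self-contained version of your route would still need that one line (or an explicit citation to the Lindsay/Kiefer--Wolfowitz gradient characterization). One small simplification: your worry about differentiation under the integral sign can be sidestepped entirely, since $L(f)=\int \log m_f(y)\,m(y)\,dy$ is concave and $m_{(1-\eps)f^\star+\eps g}$ is affine in $\eps$, so the difference quotients are monotone and the one-sided directional derivative exists in $(-\infty,+\infty]$ by monotone convergence, with no dominating bound required; optimality then rules out the $+\infty$ case automatically.
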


\begin{proof}
Plugging $f^\star$ into the expression in \eqref{eq:phimap} gives 
\[ \phi(f^\star)(u) = f^\star(u) \Bigl\{ \int \frac{p(y \mid u)}{m_{f^\star}(y)} m(y) \,dy - 1 \Bigr\}. \]
By the fact that $f^\star$ minimizes $K(m,m_f)$, it follows from Lemma~3.3 of \citet{mt-rate} that $\phi(f^\star)(u) \leq 0$ for each $u$.  But since $\sum_{u} \phi(f^\star)(u)$ vanishes, it must be that $\phi(f^\star)(u) = 0$ for each $u$, proving the claim.  
\end{proof}

The goal is to show that $f^\star$ is a stable equilibrium in the sense that any solution to the ODE converges to $f^\star$, regardless of the initial condition.  For this, a \emph{Lyapunov function} will be useful.  

\begin{defn}
\label{def:lyapunov}
A function $\ell: \FF \to \RR$ is a Lyapunov function at $f^\star$ for the ODE $df^t/dt = \phi(f^t)$ if (i) $\ell(f)$ is continuously differentiable in a neighborhood of $f^\star$, (ii) $\ell(f) \geq 0$ with equality if and only if $f = f^\star$, and (iii) $\dot\ell(f) = \langle \grad \ell(f), \phi(f) \rangle \leq 0$.  
\end{defn}

Lyapunov's theory, described beautifully in \citet{lasalle}, states that if a Lyapunov function $\ell(f)$ exists at $f = f^\star$, then $f^\star$ is a stable equilibrium point.  Next I show that a slight variation of the Kullback--Leibler divergence is a Lyapunov function in the present context.  

\begin{lem}
\label{lem:lyapunov}
The mapping $\ell: \FF \to [0,\infty)$ given by 
\begin{equation}
\label{eq:kl}
\textstyle \ell(f) = K(m,m_f) - K^\star + \sum_u f(u) - 1
\end{equation}
is a Lyapunov function for the ODE $df^t/dt = \phi(f^t)$.  
\end{lem}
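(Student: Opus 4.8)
The plan is to verify the three defining properties of a Lyapunov function from Definition~\ref{def:lyapunov} for the specific $\ell$ in \eqref{eq:kl}. Properties (i) and (ii) should be essentially bookkeeping. For (i), note that $K(m, m_f) = \int \log\{m(y)/m_f(y)\}\, m(y)\,dy$ depends smoothly on $f$ wherever $m_f(y) > 0$, which is guaranteed on a neighborhood of $f^\star$ in the closure of $\FF$ by Assumption~\ref{as:kernel} (continuity of $u \mapsto p(y \mid u)$ on the finite set $\U$), so $\ell$ is continuously differentiable there; the added linear term $\sum_u f(u) - 1$ is trivially smooth. For (ii), on the simplex $\FF$ the correction term $\sum_u f(u) - 1$ vanishes, so $\ell(f) = K(m, m_f) - K^\star \geq 0$ with equality iff $m_f = m_{f^\star}$ (by definition of $K^\star$ and nonnegativity of KL), and then Assumption~\ref{as:identifiable} upgrades this to $f = f^\star$. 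One subtlety: $\ell$ must be nonnegative as a function $\FF \to [0,\infty)$, and since we only ever evaluate it on the simplex the linear term is harmless; it is included precisely so that the gradient picks up the right Lagrange-multiplier-type term in the derivative computation below.

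The substance is property (iii): $\dot\ell(f) = \langle \grad \ell(f), \phi(f)\rangle \leq 0$. First I would compute the gradient. Differentiating $K(m, m_f)$ in the direction of the $u$-th coordinate gives $\partial K(m,m_f)/\partial f(u) = -\int \{p(y \mid u)/m_f(y)\}\, m(y)\,dy$, using $\partial m_f(y)/\partial f(u) = p(y \mid u)$. Adding the derivative of the linear term, which is $1$ in each coordinate, yields
\[
\grad \ell(f)(u) = 1 - \int \frac{p(y \mid u)}{m_f(y)}\, m(y)\,dy.
\]
Comparing with \eqref{eq:phimap}, we see $\phi(f)(u) = f(u)\{-\grad\ell(f)(u)\}$, i.e. $\phi(f)(u) = -f(u)\,\grad\ell(f)(u)$. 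Therefore
\[
\dot\ell(f) = \sum_u \grad\ell(f)(u)\,\phi(f)(u) = -\sum_u f(u)\,\{\grad\ell(f)(u)\}^2 \leq 0,
\]
since $f(u) \geq 0$ on $\FF$. This is the clean structural fact that makes $\ell$ work: the PR drift $\phi$ is, coordinatewise, a negative-multiple-of-a-nonnegative-weight times the gradient of $\ell$, so $\ell$ is automatically nonincreasing along trajectories.

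The main obstacle I anticipate is not the algebra but the analytic care needed to differentiate $K(m, m_f)$ under the integral sign, i.e. justifying $\partial K(m,m_f)/\partial f(u) = -\int \{p(y \mid u)/m_f(y)\} m(y)\,dy$ and the continuity of this expression in $f$. This requires a dominating-function argument near $f^\star$: one must bound $p(y \mid u)/m_f(y)$ uniformly for $f$ in a neighborhood of $f^\star$ by something $m$-integrable, which is where the standing assumption $K(m, m') < \infty$ for all $m' \in \MM$, together with Assumption~\ref{as:kernel}, should be invoked (and one may appeal to the relevant lemma of \citet{mt-rate} if a ready-made statement is available). A secondary point worth a remark is that property (iii) as computed is a global inequality on all of $\FF$, not merely local, and that equality $\dot\ell(f) = 0$ forces $\grad\ell(f)(u) = 0$ on the support of $f$; combined with Lemma~\ref{lem:equilibrium} this pins the only invariant set down to $\{f^\star\}$, which is exactly what is needed downstream for the stable-equilibrium conclusion via LaSalle's principle.
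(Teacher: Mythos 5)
Your proposal is correct and follows essentially the same route as the paper: verify (i)--(ii) directly, compute $\grad\ell(f)(u) = 1 - \int \{p(y\mid u)/m_f(y)\}\,m(y)\,dy$, observe the key identity $\phi(f)(u) = -f(u)\{\grad\ell(f)\}(u)$, and conclude $\dot\ell(f) = -\sum_u f(u)\{\grad\ell(f)(u)\}^2 \leq 0$. Your added care about differentiation under the integral sign and about equality in (iii) holding only on the support of $f$ is a sensible refinement of the paper's terser argument, but not a different approach.
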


\begin{proof}
Properties (i) and (ii) in Definition~\ref{def:lyapunov} are obvious.  For property (iii), simple calculus reveals that $\phi(f)(u) = -f(u) \{ \grad \ell(f) \}(u)$, from which it follows that $\dot\ell(f) = -\sum_u f(u) \{ \grad \ell(f) \}(u)^2 \leq 0$.  That equality is obtained if and only if $f=f^\star$ follows from the fact that $f^\star$ is the unique minimizer of $K(m,m_f)$ and, hence, the only point at which $\grad \ell(f)$ vanishes.  
\end{proof}

The function $\ell(f)$ in \eqref{eq:kl} can be viewed as a Lagrange multiplier version of the Kullback--Leibler divergence with the trivial constraint $\sum_u f(u) = 1$.  This is consistent with the interpretation of PR as an algorithm that asymptotically minimizes $K(m,m_f)$ over $\FF$ \citep{mt-rate}.  Another important observation, used in Lemma~\ref{lem:derivative} below, is that $\ell(f)$ is convex.  

Next I state an extension of the PR convergence theorem in \citet{martinghosh} for the case where the true data-generating density $m$ need not belong to the class $\MM$ of mixture models \eqref{eq:mixture}.  For this we need

\begin{assumption}
\label{as:weights}
$\sum_n w_n = \infty$ and $\sum_n w_n^{1+\eps} < \infty$ for some $\eps \in (0,1]$.  
\end{assumption}

In practice, it is common to take $w_n = (n+1)^{-\gamma}$ for $\gamma \in (1/2, 1]$.  Then Assumption~\ref{as:weights} holds with $\eps > \gamma^{-1} - 1$.  

\begin{thm}
\label{thm:mg}
Under Assumptions~\ref{as:kernel}--\ref{as:weights}, $f_n \to f^\star$ almost surely, where $f^\star$ is the unique minimizer of $K(m,m_f)$ over $\FF$.    
\end{thm}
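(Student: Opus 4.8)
The plan is to invoke the standard ODE method for Robbins--Monro stochastic approximations, using the pieces already assembled above. We have written the PR update in the form \eqref{eq:pr1}, $f_n = f_{n-1} + w_n \Phi(Y_n, f_{n-1})$, and decomposed $\Phi(Y_n, f_{n-1}) = \phi(f_{n-1}) + Z_n$ where, by Lemma~\ref{lem:martingale}, $\{Z_n\}$ is an $\A_n$-martingale difference sequence with $\|Z_n\|$ uniformly bounded. The associated mean ODE is $df^t/dt = \phi(f^t)$. By Lemma~\ref{lem:equilibrium}, $f^\star$ is an equilibrium of this ODE, and by Lemma~\ref{lem:lyapunov} the function $\ell$ of \eqref{eq:kl} is a Lyapunov function at $f^\star$, so every ODE trajectory in $\FF$ converges to $f^\star$. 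Since $\FF$ is compact (the simplex) and $\phi$ is continuous on $\FF$ (Assumption~\ref{as:kernel} plus the fact that $m_f(y)$ is bounded away from $0$ on the relevant set), the iterates $f_n$ stay in $\FF$ and the trajectory is bounded, so there is no stability/boundedness obstruction to overcome.

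First I would verify the hypotheses of a Kushner--Clark type theorem, e.g.\ the version in \citet{kushner}: the weights satisfy $\sum_n w_n = \infty$ and $\sum_n w_n^2 < \infty$ — the latter following from Assumption~\ref{as:weights} since $w_n^{1+\eps} \to 0$ forces $w_n^2 \le w_n^{1+\eps}$ for $n$ large when $\eps \le 1$, actually more directly $\sum w_n^{1+\eps} < \infty$ with $\eps \le 1$ does not itself give $\sum w_n^2 < \infty$, so I would instead use the refined condition that suffices here: $\sum_n w_n^{1+\eps} < \infty$ controls the tail of the martingale $M_n = \sum_{i \le n} w_i Z_i$. Precisely, since $\|Z_i\|$ is bounded, $\E\|M_n - M_{n-1}\|^{1+\eps} \lesssim w_n^{1+\eps}$ is summable, so by a martingale convergence argument (Chow's theorem, as in \citet{martinghosh}) $M_n$ converges almost surely. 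Hence the "noise" contribution is asymptotically negligible and the standard ODE comparison applies: $f_n$ tracks the ODE trajectory and therefore converges to the unique limit point $f^\star$ of that trajectory, almost surely.

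Concretely the steps are: (1) state that $f_n \in \FF$ for all $n$ and $\FF$ is compact, so the sequence is tight/bounded; (2) cite Lemmas~\ref{lem:equilibrium} and \ref{lem:lyapunov} to conclude that the ODE has $f^\star$ as its unique globally attracting equilibrium in $\FF$; (3) use Lemma~\ref{lem:martingale} and Assumption~\ref{as:weights} to show $\sum_i w_i Z_i$ converges a.s.; (4) apply the ODE method (Theorem~2.1 of \citet{martinghosh}, or Theorem~5.2.1 of \citet{kushner}) to deduce that the limit set of $\{f_n\}$ is contained in the limit set of the ODE, which is the single point $f^\star$; (5) conclude $f_n \to f^\star$ a.s. The only real subtlety — and the step I would be most careful about — is (3): matching the summability exponent $1+\eps$ from Assumption~\ref{as:weights} to what the martingale convergence theorem needs, since one does not get the usual $\sum w_n^2 < \infty$ for free when $\eps < 1$. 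The resolution is that boundedness of $\|Z_n\|$ (Lemma~\ref{lem:martingale}) lets one work with the $L^{1+\eps}$ norm of the increments rather than $L^2$, which is exactly why Assumption~\ref{as:weights} is phrased with the exponent $1+\eps$. This is essentially the argument in \citet{martinghosh}; the present theorem is the misspecified-model extension, and the only change is that the equilibrium is $f^\star$ (the KL projection) rather than the true $f$, which Lemmas~\ref{lem:equilibrium}--\ref{lem:lyapunov} have already accommodated.
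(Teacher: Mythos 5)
Your proposal takes essentially the same route as the paper, whose proof simply invokes the ODE method of stochastic approximation (Theorem~5.2.3 of \citet{kushner}) together with Lemmas~\ref{lem:martingale}--\ref{lem:lyapunov} and the continuity of $\phi$, following \citet{martinghosh}; your expanded treatment of the noise term and the boundedness of the iterates is consistent with that argument. One small aside: since the weights lie in $(0,1)$ and $\eps \le 1$, one does get $\sum_n w_n^2 \le \sum_n w_n^{1+\eps} < \infty$ directly, though your $L^{1+\eps}$ martingale (Chow-type) argument is equally valid and is the one used in the cited reference.
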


\begin{proof}
In light of Lemmas~\ref{lem:martingale}--\ref{lem:lyapunov}, the claim follows from Theorem~5.2.3 of \citet{kushner} and the continuity of $\phi(f)$; see \citet{martinghosh}.  
\end{proof}

The main result on a rate of convergence for PR will make use of a general theorem on convergence rates of stochastic approximation \citep[][Theorem~3.1.1]{chen2002}; see Appendix~\ref{S:chen}.  But two preliminary result are needed first. 

\begin{lem}
\label{lem:martingale2}
The sequence $Z_n$ in \eqref{eq:martingale} satisfies $\sum_{n=1}^\infty w_n^{1-\delta} Z_n < \infty$ almost surely for $\delta \in ( 0, (1-\eps)/2 ]$, where $\eps$ is as in Assumption~\ref{as:weights}.  
\end{lem}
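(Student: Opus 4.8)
The plan is to observe that the weighted partial sums $M_N = \sum_{n=1}^N w_n^{1-\delta} Z_n$ form an $L^2$-bounded martingale and then invoke the martingale convergence theorem. To set this up, I would first recall from Lemma~\ref{lem:martingale} that $\{Z_n\}$ is a martingale difference sequence with respect to $\{\A_n\}$ and that there is a deterministic constant $C$ with $\|Z_n\| \le C$ for all $n$. Since the weights $w_n^{1-\delta}$ are deterministic, $\E[M_N \mid \A_{N-1}] = M_{N-1} + w_N^{1-\delta}\,\E[Z_N \mid \A_{N-1}] = M_{N-1}$, so $\{M_N\}$ is an $\RR^s$-valued martingale, and each $M_N$ is square-integrable, being a finite sum of bounded terms.

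Next I would bound $\E\|M_N\|^2$ using the orthogonality of martingale increments: for $m < n$ one has $\E\langle Z_m, Z_n\rangle = \E\langle Z_m, \E[Z_n \mid \A_{n-1}]\rangle = 0$ (since $Z_m$ is $\A_{n-1}$-measurable), so
\begin{equation*}
\E\|M_N\|^2 = \sum_{n=1}^N w_n^{2(1-\delta)}\,\E\|Z_n\|^2 \le C^2 \sum_{n=1}^N w_n^{2(1-\delta)}.
\end{equation*}
The restriction $\delta \le (1-\eps)/2$ is exactly what is needed to make $2(1-\delta) \ge 1+\eps$; since $w_n \in (0,1)$ this gives $w_n^{2(1-\delta)} \le w_n^{1+\eps}$, and hence $\sum_{n \ge 1} w_n^{2(1-\delta)} \le \sum_{n \ge 1} w_n^{1+\eps} < \infty$ by Assumption~\ref{as:weights}. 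Therefore $\sup_N \E\|M_N\|^2 < \infty$.

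To finish, I would apply the fact that an $L^2$-bounded martingale converges almost surely (coordinatewise, or via a vector-valued version), so $M_N$ converges to a finite limit with probability one, which is precisely the assertion that $\sum_{n=1}^\infty w_n^{1-\delta} Z_n < \infty$ almost surely. I do not expect any genuine obstacle: the argument is a routine second-moment computation. The only points meriting a word of care are the vector-valued nature of $Z_n$, handled by working one coordinate at a time, and the fact that the stated range of $\delta$ is nonempty only when $\eps < 1$ in Assumption~\ref{as:weights} — which is indeed the regime relevant for the subsequent appeal to Chen's theorem.
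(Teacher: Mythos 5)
Your proof is correct and follows essentially the same route as the paper's: form the weighted partial-sum martingale, bound its second moment by $\sum_n w_n^{2(1-\delta)}$, note that $\delta \le (1-\eps)/2$ together with Assumption~\ref{as:weights} makes this sum finite, and conclude by the $L^2$-bounded martingale convergence theorem. Your added details (orthogonality of increments, the comparison $w_n^{2(1-\delta)} \le w_n^{1+\eps}$ since $w_n \in (0,1)$, and the coordinatewise treatment of the vector-valued case) simply make explicit what the paper leaves implicit.
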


\begin{proof}
Let $X_N = \sum_{n=1}^N w_n^{1-\delta} Z_n$.  By Lemma~\ref{lem:martingale}, $\{X_N: N \geq 1\}$ is a martingale sequence and, since $\{Z_n\}$ is bounded, 
\[ \E \|X_N\|^2 = \sum_{n=1}^N w_n^{2(1-\delta)} \E\|Z_n\|^2 \leq \text{const} \cdot \sum_{n=1}^\infty w_n^{2(1-\delta)}. \]
Taking $\delta \leq (1-\eps)/2$, it follows from Assumption~\ref{as:weights} that $\E\|X_N\|^2$ is uniformly bounded in $N$.  Then the martingale convergence theorem \citep[][Theorem~5.14]{breiman1992} implies that $X_N$ converges almost surely, completing the proof.  
\end{proof}

An additional assumption about the weights is required.  For weights given by $w_n = (n+1)^{-\gamma}$, this assumption holds as long as $\gamma < 1$.  

\begin{assumption}
\label{as:weights2}
$\{w_n\}$ satisfies $w_{n+1}^{-1} - w_n^{-1} \to 0$.  
\end{assumption}

\begin{lem}
\label{lem:derivative}
Let $J = D\phi(f^\star)$ denote the derivative of $\phi$ evaluated at $f=f^\star$.  If $f^\star$ is in the interior of $\FF$, then all eigenvalues of $J$ are negative.
\end{lem}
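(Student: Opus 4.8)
The plan is to compute the Jacobian $J$ in closed form and recognize it as minus the product of a positive-definite diagonal matrix and a positive-definite Gram matrix. First I would use the identity $\phi(f)(u) = -f(u)\{\grad\ell(f)\}(u)$ from the proof of Lemma~\ref{lem:lyapunov}, with $\ell$ the Lagrangian Kullback--Leibler function in \eqref{eq:kl}; a one-line calculation gives $\{\grad\ell(f)\}(u) = 1 - \int \frac{p(y\mid u)}{m_f(y)}\,m(y)\,dy$. Since $f^\star$ is interior, every coordinate $f^\star(u)$ is strictly positive, so Lemma~\ref{lem:equilibrium} forces the bracketed quantity $\int \frac{p(y\mid u)}{m_{f^\star}(y)}\,m(y)\,dy - 1$ to vanish for each $u$; equivalently, $\grad\ell(f^\star) = 0$. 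Differentiating $\phi(f)(u) = -f(u)\{\grad\ell(f)\}(u)$ at $f = f^\star$, the contribution from differentiating the prefactor $f(u)$ drops out, leaving
\[ J(u,v) = -f^\star(u)\, H(u,v), \qquad H(u,v) := \{\grad^2\ell(f^\star)\}(u,v) = \int \frac{p(y\mid u)\, p(y\mid v)}{m_{f^\star}(y)^2}\, m(y)\,dy, \]
that is, $J = -DH$ with $D = \mathrm{diag}\{f^\star(u) : u \in \U\}$.

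Next I would verify that $D$ and $H$ are both positive definite. For $D$ this is immediate from interiority. For $H$, the expression $\langle x, Hx \rangle = \int \bigl( \sum_u x(u)\, p(y\mid u) / m_{f^\star}(y) \bigr)^2 m(y)\,dy$ shows $H \succeq 0$ (in agreement with the convexity of $\ell$ noted after Lemma~\ref{lem:lyapunov}), and it vanishes only when $\sum_u x(u)\, p(y\mid u) = 0$ for $m$-almost every $y$. Because each kernel $p(\cdot\mid u)$ integrates to one, any such $x$ satisfies $\sum_u x(u) = 0$, so $x$ is a difference of two mixing vectors; Assumption~\ref{as:identifiable} then forces $x = 0$, giving $H \succ 0$. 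Finally, since $D \succ 0$ I may write $J = -DH = -D^{1/2}\bigl(D^{1/2}HD^{1/2}\bigr)D^{-1/2}$, so $J$ is similar to the symmetric negative-definite matrix $-D^{1/2}HD^{1/2}$; hence every eigenvalue of $J$ is real and strictly negative. (The equilibrium identity also yields $Hf^\star = \mathbf{1}$, so $Jf^\star = -f^\star$ and the tangent space $\{x : \sum_u x(u) = 0\}$ of the simplex at $f^\star$ is $J$-invariant, so the same conclusion holds for $J$ viewed as the linearization of the ODE restricted to $\FF$ — which is what matters for the stochastic-approximation argument.)

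The two differentiations and the similarity argument are routine; the step I expect to need the most care is the strict positive definiteness of $H$. There one must be sure that identifiability of $f$ in \eqref{eq:mixture} really does rule out a nontrivial linear combination of the $p(\cdot\mid u)$ vanishing on the support of $m$ — nominally slightly stronger than identifiability ``everywhere'' — rather than only off an $m$-null set where Assumption~\ref{as:identifiable} imposes no constraint. Everything else is bookkeeping.
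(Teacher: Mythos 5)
Your proposal is correct and follows essentially the same route as the paper: compute $J = -\mathrm{diag}(f^\star)\,\grad^2\ell(f^\star)$, use interiority of $f^\star$ for the diagonal factor, and conclude negativity of the eigenvalues from the product of two positive definite matrices (your similarity argument is just the standard justification of that last fact). The only difference is that you derive strict positive definiteness of $\grad^2\ell(f^\star)$ from the quadratic-form expression together with Assumption~\ref{as:identifiable} (and rightly flag the $m$-almost-everywhere subtlety), whereas the paper simply attributes it to convexity of $\ell$; your treatment is, if anything, the more careful one.
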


\begin{proof}
Simple calculus reveals that $J = D\phi(f^\star)$ is of the form 
\[ J(u,v) = -f^\star(u) \int \frac{p(y \mid u)p(y \mid v)}{m_{f^\star}(y)^2} m(y) \,dy, \quad u,v \in \U. \]
In matrix notation, write $J = -\text{diag}(f^\star) \cdot \grad^2 \ell(f^\star)$, where $\text{diag}(f^\star)$ is a diagonal matrix with the elements of $f^\star$ as its diagonal entries, and $\grad^2 \ell(f^\star)$ is the second derivative matrix of $\ell(f)$ evaluated at $f = f^\star$.  Since $f^\star$ is in the interior of $\FF$, all entries are positive and, hence, $\text{diag}(f^\star)$ is positive definite.  Since $\ell(f)$ is convex on $\FF$, $\grad^2 \ell(f^\star)$ is also positive definite.  The claim follows from the fact that the product of these two positive definite matrices, which is $-J$, must have positive eigenvalues.    
\end{proof}

An interesting observation is that the matrix $P = -J^{\top}$, the negative transpose of the Jacobian $J$ in Lemma~\ref{lem:derivative}, is a transition probability matrix for an irreducible, aperiodic Markov chain on $\U$.  This chain is also reversible and has $f^\star$ as its stationary distribution.  But how this observation might be useful in studying the asymptotic convergence of PR remains unclear.  

In light of Assumptions~\ref{as:kernel}--\ref{as:weights2}, Lemmas~\ref{lem:martingale2} and \ref{lem:derivative}, and the existence of a Lyapunov function proved in Lemma~\ref{lem:lyapunov}, the main result on the convergence rate of PR is a consequence of Chen's theorem in Appendix~\ref{S:chen}.

\begin{thm}
\label{thm:rate}
Assume that $f^\star$ lies in the interior of $\FF$.  Then under Assumptions~\ref{as:kernel}--\ref{as:weights2}, $\|f_n - f^\star\| = o(w_n^\delta)$ almost surely for $\delta$ in Lemma~\ref{lem:martingale2}.  
\end{thm}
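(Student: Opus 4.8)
The plan is to verify that all the hypotheses of Chen's general stochastic approximation convergence-rate theorem (Appendix~\ref{S:chen}, i.e., Theorem~3.1.1 of \citet{chen2002}) are met by the PR recursion written in the Robbins--Monro form \eqref{eq:pr1}, and then simply invoke it. The recursion $f_n = f_{n-1} + w_n \phi(f_{n-1}) + w_n Z_n$ decomposes the increment into a mean field term $\phi(f_{n-1})$ and a noise term $Z_n$, and Chen's theorem delivers a rate of the form $\|f_n - f^\star\| = o(w_n^\delta)$ once one checks: (a) the step sizes $w_n$ satisfy the standard summability and regularity conditions; (b) the noise is summable after the appropriate weighting; (c) the mean field has a well-behaved linearization at the equilibrium $f^\star$ whose Jacobian is a stability ("Hurwitz") matrix; and (d) there is a Lyapunov function certifying that the ODE $df^t/dt = \phi(f^t)$ is globally attracted to $f^\star$, so that the algorithm is in the basin of attraction asymptotically.

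First I would collect the ingredients already established in the excerpt. Assumptions~\ref{as:weights} and \ref{as:weights2} give exactly the step-size conditions Chen requires ($\sum w_n = \infty$, $\sum w_n^{1+\eps} < \infty$, and $w_{n+1}^{-1} - w_n^{-1} \to 0$). Lemma~\ref{lem:martingale2} supplies the weighted-noise summability $\sum_n w_n^{1-\delta} Z_n < \infty$ a.s.\ for $\delta \in (0, (1-\eps)/2]$, which is the precise noise condition under which Chen's theorem yields the $o(w_n^\delta)$ rate. Lemma~\ref{lem:equilibrium} identifies $f^\star$ as the (by Assumption~\ref{as:identifiable}, unique) equilibrium; Lemma~\ref{lem:lyapunov} produces a global Lyapunov function $\ell$ for that equilibrium, so (combined with Theorem~\ref{thm:mg}) $f_n \to f^\star$ a.s.\ and the iterates eventually lie in any prescribed neighborhood of $f^\star$. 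Finally, since by hypothesis $f^\star$ is interior to $\FF$, the map $\phi$ is differentiable there and Lemma~\ref{lem:derivative} shows the Jacobian $J = D\phi(f^\star)$ has all eigenvalues strictly negative, i.e.\ $J$ is stable; continuity of $\phi$ (Assumption~\ref{as:kernel}) also gives the needed smoothness of the remainder in the Taylor expansion $\phi(f) = J(f - f^\star) + o(\|f - f^\star\|)$ near $f^\star$.

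With these in hand, the proof is essentially a citation: I would state that Assumptions~\ref{as:kernel}--\ref{as:weights2} together with Lemmas~\ref{lem:martingale2} and \ref{lem:derivative} verify conditions (A1)--(A4) (or whatever labels the appendix uses for Chen's theorem), that the Lyapunov function of Lemma~\ref{lem:lyapunov} plus Theorem~\ref{thm:mg} places the tail of the sequence in the basin of attraction of $f^\star$, and hence Chen's theorem applies and gives $\|f_n - f^\star\| = o(w_n^\delta)$ almost surely. One routine but necessary remark is that the rate exponent $\delta$ inherited from Chen's conclusion is the same $\delta$ governing the noise-summability in Lemma~\ref{lem:martingale2}; I would state this explicitly so the reader sees why the range $\delta \in (0,(1-\eps)/2]$ is the operative one, and note that for $w_n = (n+1)^{-\gamma}$ this means $\|f_n - f^\star\| = o(n^{-\gamma\delta})$, which approaches the root-$n$ rate as $\gamma \to 1$ and $\eps \to 0$.

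The main obstacle is not any single computation but making sure the localization is legitimate: Chen's rate theorem is inherently local around $f^\star$ (it controls the linearized dynamics), so one must genuinely argue that almost every sample path eventually enters and stays in a neighborhood of $f^\star$ on which the linearization and the Hurwitz property of $J$ are valid. This is where the interiority of $f^\star$ is essential --- on the boundary of the simplex $\phi$ need not even be differentiable and some eigenvalues of $J$ degenerate to zero --- and it is the reason the hypothesis "$f^\star$ lies in the interior of $\FF$" cannot be dropped. The a.s.\ convergence from Theorem~\ref{thm:mg} combined with the global Lyapunov function of Lemma~\ref{lem:lyapunov} is exactly what closes this gap, so in the write-up I would be careful to invoke both, rather than treating the rate theorem as a black box that applies from arbitrary initial conditions.
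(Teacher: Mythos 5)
Your proposal is correct and follows essentially the same route as the paper: the paper's proof is precisely the observation that Assumptions~\ref{as:kernel}--\ref{as:weights2}, Lemma~\ref{lem:martingale2}, Lemma~\ref{lem:derivative}, and the Lyapunov function of Lemma~\ref{lem:lyapunov} verify conditions A1--A4 of Chen's theorem (with $\alpha=0$ under Assumption~\ref{as:weights2}, so the Hurwitz condition on $J+\alpha\delta I$ reduces to Lemma~\ref{lem:derivative}), after which the rate follows by citation. Your additional remarks on localization via Theorem~\ref{thm:mg} and on boundedness of the iterates in the simplex match the paper's appendix discussion, so there is no substantive difference.
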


When the weights are given by $w_n = (n+1)^{-\gamma}$, for $\gamma \in (1/2,1)$, it follows from Theorem~\ref{thm:rate} and the previous discussion that $\|f_n-f^\star\| = o(n^{-(1-1/2\gamma)})$ almost surely.  Since $\gamma$ can be chosen arbitrarily close to 1, it follows that the convergence rate can be made arbitrarily close to $n^{-1/2}$ almost surely.  

A slightly stronger version of Theorem~\ref{thm:rate} could be obtained if weight sequences were allowed to satisfy $w_{n+1}^{-1} - w_n^{-1} \to \alpha$, with $\alpha > 0$.  For example, if $w_n = (n+1)^{-1}$, then $\alpha=1$.  This extension would make the root-$n$ rate possible, but it would require all eigenvalues of $J$ in Lemma~\ref{lem:derivative} to be less than $-1/2$.  At this point it is unclear whether this claim is true; standard bounds for eigenvalues, such as those in Gershgorin's theorem or Proposition~2 in \citet{diaconis.stroock.1991}, are not helpful in this case.  

Almost sure rates of convergence for the mixture density $m_n$ to $m_{f^\star}$ are available as consequences of Theorem~\ref{thm:rate}.  The $L_1$ rate follows immediately from its definition, while the rate for the Kullback--Leibler contrast, $K(m,m_n) - K^\star$, requires a simple second-order Taylor approximation of $\ell(f)$ at $f = f^\star$.  

\begin{cor}
\label{cor:mixture-rate}
Under the conditions on Theorem~\ref{thm:rate}, $\int |m_n-m_{f^\star}|\,dy = o(w_n^\delta)$ almost surely for $\delta$ in Lemma~\ref{lem:martingale2}.  Likewise, $K(m,m_n)-K^\star = o(w_n^{2\delta})$.  
\end{cor}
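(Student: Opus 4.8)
The plan is to derive both rates as direct consequences of the bound $\|f_n - f^\star\| = o(w_n^\delta)$ from Theorem~\ref{thm:rate}, using only elementary facts about the kernel and the Lyapunov function $\ell$.

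For the $L_1$ rate, I would first write, for each $y$,
\[
|m_n(y) - m_{f^\star}(y)| = \Bigl| \sum_{u \in \U} p(y \mid u)\{f_n(u) - f^\star(u)\} \Bigr| \leq \sum_{u \in \U} p(y \mid u)\,|f_n(u) - f^\star(u)|.
\]
Integrating over $y$ and using $\int p(y \mid u)\,dy = 1$ for each $u$ (each $p(\cdot \mid u)$ is a density on $\Y$), Tonelli's theorem gives $\int |m_n - m_{f^\star}|\,dy \leq \sum_u |f_n(u) - f^\star(u)| = \|f_n - f^\star\|_1$. Since $\U$ is finite of size $s$, the $\ell^1$ and $\ell^2$ norms on $\RR^s$ are equivalent, so $\|f_n - f^\star\|_1 \leq \sqrt{s}\,\|f_n - f^\star\| = o(w_n^\delta)$ almost surely, which is the first claim.

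For the Kullback--Leibler contrast, the key observation is that, since $\sum_u f_n(u) = \sum_u f^\star(u) = 1$, the Lagrange-multiplier terms in \eqref{eq:kl} cancel and $K(m, m_n) - K^\star = \ell(f_n)$. Because $f^\star$ lies in the interior of $\FF$ and $\ell$ is twice continuously differentiable there with $\grad \ell(f^\star) = 0$ (this gradient vanishes by Lemma~\ref{lem:lyapunov}, or directly from $\phi(f^\star) = 0$ via the identity $\phi(f)(u) = -f(u)\{\grad\ell(f)\}(u)$ together with $f^\star(u) > 0$), a second-order Taylor expansion about $f^\star$ gives
\[
\ell(f_n) = \tfrac{1}{2}\,\langle f_n - f^\star,\ \grad^2\ell(\xi_n)(f_n - f^\star)\rangle
\]
for some $\xi_n$ on the segment between $f_n$ and $f^\star$. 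Since $f_n \to f^\star$ almost surely (Theorem~\ref{thm:mg}) and $\grad^2\ell$ is continuous at $f^\star$, the operator norm $\|\grad^2\ell(\xi_n)\|$ is eventually bounded by, say, $\|\grad^2\ell(f^\star)\| + 1$, so $\ell(f_n) \leq \text{const} \cdot \|f_n - f^\star\|^2 = o(w_n^{2\delta})$ almost surely, giving the second claim.

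I expect the only real subtlety — and it is minor — to be the justification that $K(m, m_n) - K^\star$ equals $\ell(f_n)$ and that the Taylor remainder behaves correctly: one must note that $f_n$ stays in a neighborhood of $f^\star$ where $\ell$ is $C^2$ (guaranteed eventually by the almost sure convergence in Theorem~\ref{thm:mg} and the interiority of $f^\star$), and that the finiteness assumption $K(m, m_f) < \infty$ on $\MM$ ensures $\ell$ is genuinely finite and smooth there. Everything else is routine: the $L_1$ bound is just the triangle inequality and Tonelli, and the KL bound is a one-line Taylor argument once the remainder is controlled.
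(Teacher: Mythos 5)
Your proof is correct and follows essentially the same route as the paper, which obtains the $L_1$ rate directly from the definition of $m_f$ and the Kullback--Leibler rate from a second-order Taylor expansion of $\ell$ at $f^\star$ (where $\grad\ell(f^\star)=0$ by interiority). Your write-up simply fills in the details of that sketch, and the added care about the vanishing gradient, the cancellation of the Lagrange term, and the control of the Hessian near $f^\star$ is all consistent with what the paper implicitly uses (e.g., in Lemma~\ref{lem:derivative}).
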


\citet{mt-rate} derive a bound of $o(W_n^{-1})$ for $K(m,m_n)-K^\star$ in the general compact $\U$ case, where $W_n = \sum_{i=1}^n w_i$.  When $w_n = (n+1)^{-\gamma}$, the bound for $K(m,m_n)-K^\star$ in \citet{mt-rate} becomes $o(n^{-(1-\gamma)})$, which can be no faster than $n^{-1/3}$ under their conditions.  Compare this to the rate of $o(n^{-(2-1/\gamma)})$ obtained from Corollary~\ref{cor:mixture-rate}, which is considerably faster than $n^{-1/3}$ for $\gamma \approx 1$, albeit for the special known finite support case.  So, regarding the PR weights $\{w_i: i \geq 1\}$, the message here, contrary to that in \citet{mt-rate}, is that the faster the weights vanish the faster the overall convergence.

\section{PR with unknown support}
\label{S:prml}

The PR convergence theory in the previous section assumes the finite support is known and only the mixing distribution is unknown.  In practice, however, both the support and mixing distribution are unknown and to be estimated.  To close this gap, I propose here a new PR-based approach for handling the unknown support case.  The asymptotic results in Section~\ref{S:rate} will be used to prove consistency of this new procedure.  Two simple examples are also given for illustration, but the computational details, simulations, and extensions will be presented elsewhere \citep{prml-finite}.  

\subsection{Setup}

Let $\Ubar$ be a compact set, large enough that there is a finite mixture supported in $\Ubar$ that gives a sufficiently accurate approximation to $m$.  Take $U$ to be a generic finite subset of $\Ubar$.  By treating $U$ as the fixed support, a run of PR will produce a sequence of estimates $\{(f_{i,U}, m_{i,U}): i=1,\ldots,n\}$ of the mixing and mixture distributions, whose dependence on the chosen support set $U$ are now made explicit.  In the same vein, write $\FF_U$ for the $(|U|-1)$-dimensional probability simplex and define $K^\star(U) = \inf\{K(m,m_{f,U}): f \in \FF_U\}$, the smallest Kullback--Leibler number for mixtures supported on $U$.  

The jumping off point is that the result $K(m,m_{n,U}) - K^\star(U) = o(w_n^{2\delta})$ of Corollary~\ref{cor:mixture-rate} holds ``pointwise'' for all $U$; that is, the particular support $U$ plays no role in the analysis of Section~\ref{S:rate}.  Thus, in the present case where the support is unknown, a reasonable strategy is to estimate the support by minimizing, over $U$, some estimate of $K(m,m_{n,U})$.  This is the approach advocated by \citet{mt-prml}.  Indeed, by making connections to PR and Dirichlet process mixture models, they argue that, in the present context, the appropriate estimate of $K(m,m_{n,U})$ is 
\begin{equation}
\label{eq:ku}
K_n(U) = \sum_{i=1}^n \log \frac{m(Y_i)}{m_{i-1,U}(Y_i)}, \quad U \subset \Ubar, \quad |U| < \infty. 
\end{equation}
Then the goal is to minimize $K_n(U)$ over $U$.  But since it is not possible to perform this optimization over all finite $U \subset \Ubar$, some adjustment must be made.  Consider starting with a fixed finite subset $\U$ of $\Ubar$ obtained by chopping up $\Ubar$ into a sufficiently fine grid, so that $|\U|$ is large.  Then the collection of all subsets $U$ of $\U$ is huge---it has $2^{|\U|}-1$ elements---but finite so it is possible to minimize $K_n(U)$ over $U \subseteq \U$.  \citet{prml-finite} uses a simulated annealing strategy to perform this optimization.  Once the minimizer $\hat U_n$ of $K_n(U)$ is obtained, PR is run once more to produce $f_{n,\hat U_n}$ and $m_{n,\hat U_n}$ as estimates of the mixing and mixture distributions, respectively.

\subsection{Large-sample theory}

For simplicity, I will assume that the true density $m$ is indeed a mixture density of the postulated form with support contained in $\U$; the more general case can be handled similarly, but with an additional technical assumption \citep[][Assumption~6]{mt-prml}.  Also, assume that $w_n = (n+1)^{-\gamma}$ for some $\gamma \in (0.5,1)$.  To get convergence of the approximation $K_n(U)$ to $K^\star(U)$, I will need one additional assumption, stated next, which holds for many common kernels, including normal and Poisson.

\begin{assumption}
\label{as:lr.bound}
There exists a finite constant $A > 0$ such that 
\[ \max_{u_1,u_2,u_3 \in \U} \int \Bigl\{ \frac{p(y \mid u_1)}{p(y \mid u_2)} \Bigr\}^2 p(y \mid u_3) \,dy \leq A. \]
\end{assumption}

Under Assumptions~\ref{as:kernel}--\ref{as:lr.bound}, one can follow the proof of Theorem~2 in \citet{mt-prml} to conclude that, for each fixed $U \subseteq \U$, 
\begin{equation}
\label{eq:limit}
\lim_{n \to \infty} \Bigl| c_n\bigl\{K_n(U) - K^\star(U)\bigr\} - \frac{c_n}{n}\sum_{i=1}^n \bigl\{K(m,m_{i-1,U}) - K^\star(U) \bigr\} \Bigr| = 0,
\end{equation}
almost surely, for any sequence $c_n$ that satisfies $c_n = O(n^{1/2-\eps})$ for some $\eps > 0$.  It follows from Corollary~\ref{cor:mixture-rate} that the summation in \eqref{eq:limit} is of the order $n^{1/\gamma-1}$.  So, if $\eps > \max\{0,\gamma^{-1}-3/2\}$, the right-most term in the modulus in \eqref{eq:limit} vanishes and, therefore, so must the left-most term.  This proves that, for $\gamma \approx 1$, $K_n(U) \to K^\star(U)$ pointwise in $U$ at a rate just slower than $n^{-1/2}$.  But since $2^{\U}$ is finite, the convergence is also uniform.  The following theorem summarizes this result.  

\begin{thm}
\label{thm:mprmle}
Choose weights $w_n = (n+1)^{-\gamma}$ with $\gamma \in (0.5,1)$ and let $\eps > \max\{0,\gamma^{-1}-3/2\}$.  Then, under Assumptions~\ref{as:kernel}--\ref{as:lr.bound}, $n^{1/2-\eps}\{K_n(U)-K^\star(U)\} \to 0$ almost surely as $n \to \infty$.  Moreover, since $U$ ranges only over a finite set, $n^{1/2-\eps}K_n(\hat U_n) \to 0 = K^\star(U^\star)$, where $U^\star \subseteq \U$ is the support of the true mixture distribution.   
\end{thm}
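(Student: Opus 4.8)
The skeleton is essentially the one indicated in the text, and the plan is to fill in its moving parts in turn: the decomposition \eqref{eq:limit}, the order of the averaged Kullback--Leibler contrasts via Corollary~\ref{cor:mixture-rate}, the passage from a pointwise to a uniform statement, and finally a squeeze argument for $\hat U_n$.

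First I would establish \eqref{eq:limit} by centering: write each term $\log\{m(Y_i)/m_{i-1,U}(Y_i)\}$ about its conditional mean $\E[\log\{m(Y_i)/m_{i-1,U}(Y_i)\}\mid\A_{i-1}]=K(m,m_{i-1,U})$. The martingale-difference part then contributes $\frac1n\sum_{i=1}^n D_i(U)$, with $D_i(U)=\log\{m(Y_i)/m_{i-1,U}(Y_i)\}-K(m,m_{i-1,U})$, and the predictable part contributes $\frac1n\sum_{i=1}^n\{K(m,m_{i-1,U})-K^\star(U)\}$, which is exactly the Ces\`aro term in \eqref{eq:limit}. The role of Assumption~\ref{as:lr.bound} is precisely to bound the conditional second moments $\E[D_i(U)^2\mid\A_{i-1}]$ uniformly in $i$ and in $U\subseteq\U$: the squared likelihood ratios of the form $\{p(Y_i\mid u_1)/m_{i-1,U}(Y_i)\}^2$ that arise there are dominated using the constant $A$. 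Hence $\sum_i i^{-2}\E[D_i(U)^2]<\infty$, and a Chow-type strong law for martingales gives $\frac1n\sum_{i=1}^n D_i(U)=o(n^{-1/2+\eps})$ almost surely for every $\eps>0$; equivalently, $c_n\cdot\frac1n\sum_{i=1}^n D_i(U)\to0$ whenever $c_n=O(n^{1/2-\eps})$. This reproduces the argument behind Theorem~2 of \citet{mt-prml} and is the one genuinely analytic step.

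Next, for a fixed $U$ whose minimizer $f^\star_U$ lies in the interior of $\FF_U$ -- which holds in particular at $U=U^\star$, where $m=m_{f^\star,U^\star}$ has strictly positive weights -- Corollary~\ref{cor:mixture-rate} with $w_n=(n+1)^{-\gamma}$ gives $K(m,m_{i-1,U})-K^\star(U)=o(w_{i-1}^{2\delta})$, and summing yields $\sum_{i=1}^n\{K(m,m_{i-1,U})-K^\star(U)\}=o(n^{1/\gamma-1})$. Taking $c_n=n^{1/2-\eps}$, the Ces\`aro term is therefore $o(n^{1/2-\eps}\cdot n^{1/\gamma-1}/n)=o(n^{-3/2-\eps+1/\gamma})$, which vanishes precisely when $\eps>\gamma^{-1}-3/2$; combined with the martingale estimate and \eqref{eq:limit}, this gives $n^{1/2-\eps}\{K_n(U)-K^\star(U)\}\to0$ almost surely for each such $U$. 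Because $\U$ is finite there are only $2^{|\U|}-1$ nonempty subsets, so intersecting the corresponding almost-sure events upgrades this to $\max_{U\subseteq\U}n^{1/2-\eps}|K_n(U)-K^\star(U)|\to0$ almost surely.

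Finally, for the estimator: since $m$ is itself a mixture supported on $U^\star$ we have $K^\star(U^\star)=0$, while $K^\star(U)\ge0$ for every $U$. Minimality of $\hat U_n$ gives on the one hand $n^{1/2-\eps}K_n(\hat U_n)\le n^{1/2-\eps}K_n(U^\star)=n^{1/2-\eps}\{K_n(U^\star)-K^\star(U^\star)\}\to0$, and on the other $n^{1/2-\eps}K_n(\hat U_n)\ge n^{1/2-\eps}K^\star(\hat U_n)-\max_{U}n^{1/2-\eps}|K_n(U)-K^\star(U)|\ge-\max_{U}n^{1/2-\eps}|K_n(U)-K^\star(U)|\to0$; squeezing gives $n^{1/2-\eps}K_n(\hat U_n)\to0=K^\star(U^\star)$. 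The main obstacle is the uniform conditional-second-moment control of the martingale differences $D_i(U)$ under Assumption~\ref{as:lr.bound} needed for \eqref{eq:limit}; a secondary point is justifying that Corollary~\ref{cor:mixture-rate} applies at the non-$U^\star$ supports, where $f^\star_U$ sits on the boundary of $\FF_U$ -- exactly the case the text defers to the additional technical assumption of \citet{mt-prml}.
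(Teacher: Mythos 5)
Your argument follows the paper's own route: the martingale-centering decomposition behind \eqref{eq:limit} (which the paper defers to the proof of Theorem~2 of \citet{mt-prml}), the $o(n^{1/\gamma-1})$ bound on the Ces\`aro term via Corollary~\ref{cor:mixture-rate}, uniformity over the finitely many $U \subseteq \U$, and a squeeze argument for $\hat U_n$, with the same arithmetic yielding the condition $\eps > \gamma^{-1}-3/2$; so it is correct and essentially the paper's proof with the deferred details filled in. Your closing caveat---that Corollary~\ref{cor:mixture-rate} formally needs $f^\star_U$ in the interior of $\FF_U$, which can fail for $U \neq U^\star$---is a fair flag of a point the paper itself passes over silently rather than a divergence from its argument.
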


If I define a distance $d$ between two sets as the cardinality of their symmetric difference, then Theorem~\ref{thm:mprmle} states that $d(\hat U_n, U^\star) = o(n^{-1/2+\eps})$.  In other words, $\hat U_n$ is a nearly root-$n$ $d$-consistent estimate of $U^\star$.  Furthermore, a nearly root-$n$ rate of convergence for $f_{n,\hat U_n}$ can be obtained, which I now sketch.  With a slight abuse of notation, I can bound the total variation distance between $f_{n,\hat U_n}$ and $f^\star$ as follows:
\begin{align*}
d_{\text{\sc tv}}(f_{n,\hat U_n},f^\star) & = \sum_{u \in \U} |f_{n,\hat U_n}(u) - f^\star(u)| \\
& = \sum_{u \in \hat U_n \cap U^{\star c}} f_{n,\hat U_n}(u) + \sum_{u \in \hat U_n^c \cap U^\star} f^\star(u) + \sum_{u \in \hat U_n \cap U^\star} |f_{n,\hat U_n}(u) - f^\star(u)| \\
& \leq d(\hat U_n, U^\star) + d_{\text{\sc tv}}(f_{n,\hat U_n}, f_{n,U^\star}) + d_{\text{\sc tv}}(f_{n,U^\star},f^\star). 
\end{align*}
The two outer-most terms on the right-hand side vanish at a nearly root-$n$ rate according to Theorems~\ref{thm:mprmle} and \ref{thm:rate}, respectively.  The middle term is more difficult to analyze, but it is clear that the data-dependent PR mapping $U \mapsto f_{n,U}$ is, in some sense, continuous in $U$.  So, the convergence of $d_{\text{\sc tv}}(f_{n,\hat U_n},f_{n,U^\star})$ is also driven by $d(\hat U_n, U^\star)$.  Therefore, the rate for $d_{\text{\sc tv}}(f_{n,\hat U_n},f^\star)$ must also be nearly $n^{-1/2}$.  

Recall that \citet{chen1995} showed that, for finite mixtures, the optimal rate of convergence is $n^{-1/4}$.  In that case, the unknown finite support is allowed to be anything, essentially nonparametric, so the rates are relatively slow.  In contrast, by restricting the set of candidate supports to subsets of a large but ultimately finite set $\U$, I am able to achieve a nearly parametric root-$n$ rate of convergence.

\subsection{Examples}

Here I give two relatively simple real-data examples---a Gaussian location mixture and a Poisson mixture---to illustrate the potential of the proposed method.   

\begin{ex}
\label{ex:galaxy1}
Under the Big Bang model, galaxies should form clusters and the relative velocities of the galaxies should be similar within clusters.  \citet{roeder} considers velocity data for $n = 82$ galaxies.  She models this data as a finite Gaussian mixture, with the number and location of the mixture components unknown.  The assumption is that each galactic cluster is a single component of the Gaussian mixture.  The presence of multiple mixture components is consistent with the hypothesis of galaxy clustering.  

We apply the methodology outlined above to estimate the mixing distribution $f$.  We will consider a simple Gaussian mixture model in which each component has variance $\sigma^2 = 1$, based on the \emph{a priori} considerations of \citet{escobar.west.1995}.  From the observed velocities, it is apparent that the mixture components should be centered somewhere in the interval $\Ubar = [5,40]$, so we choose a grid of candidate support points $\U = \{5.0,5.5,6.0,\ldots,39.5,40.0\}$.  Figure~\ref{fig:galaxy} shows the corresponding estimates of the mixing and mixture distribution.   The PR method identifies six galaxy clusters, and the estimates of $U$ and $f$ closely match those of \citet{ishwaran.james.sun.2001} and others.  
\end{ex}

\begin{figure}
\begin{center}
\subfigure[Mixing distribution]{\scalebox{0.6}{\includegraphics{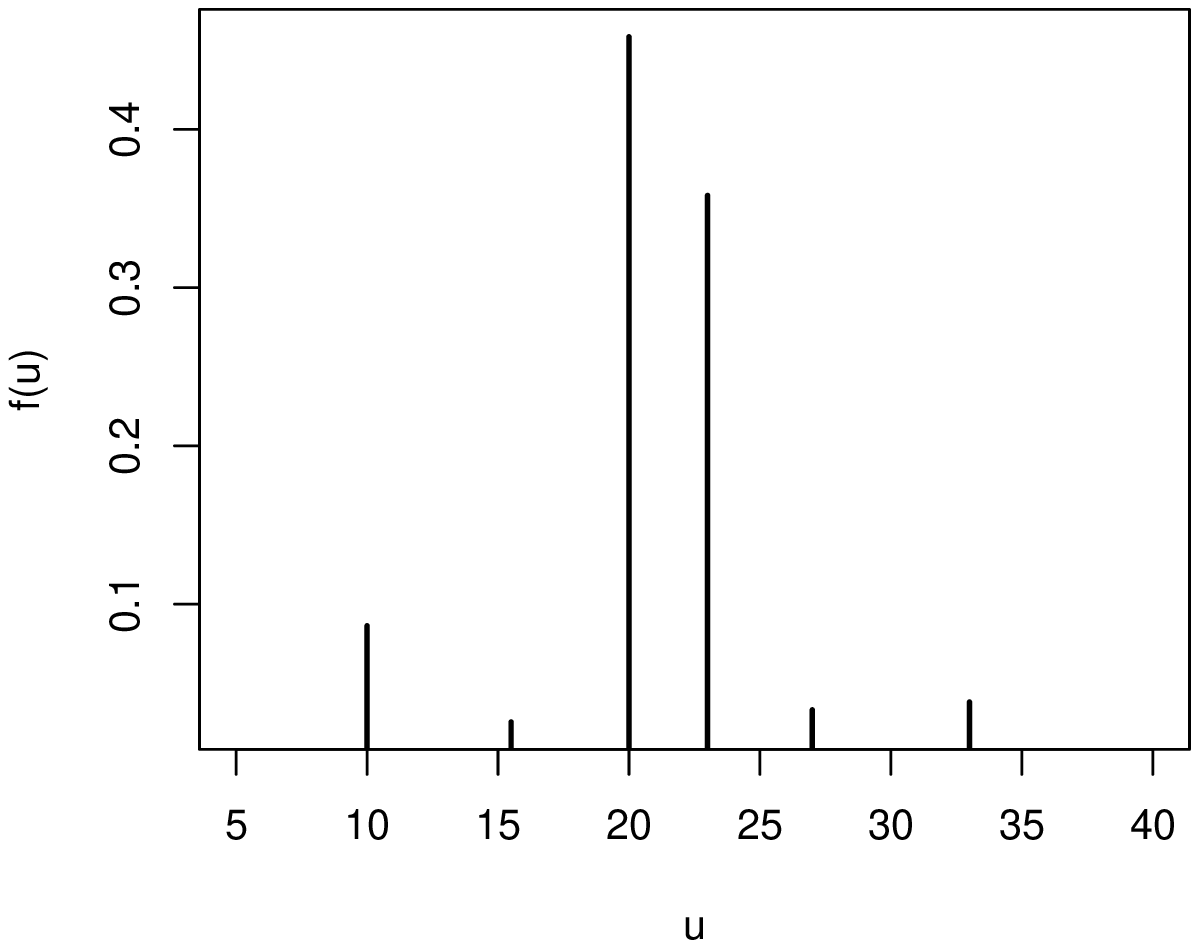}}}
\subfigure[Mixture distribution]{\scalebox{0.6}{\includegraphics{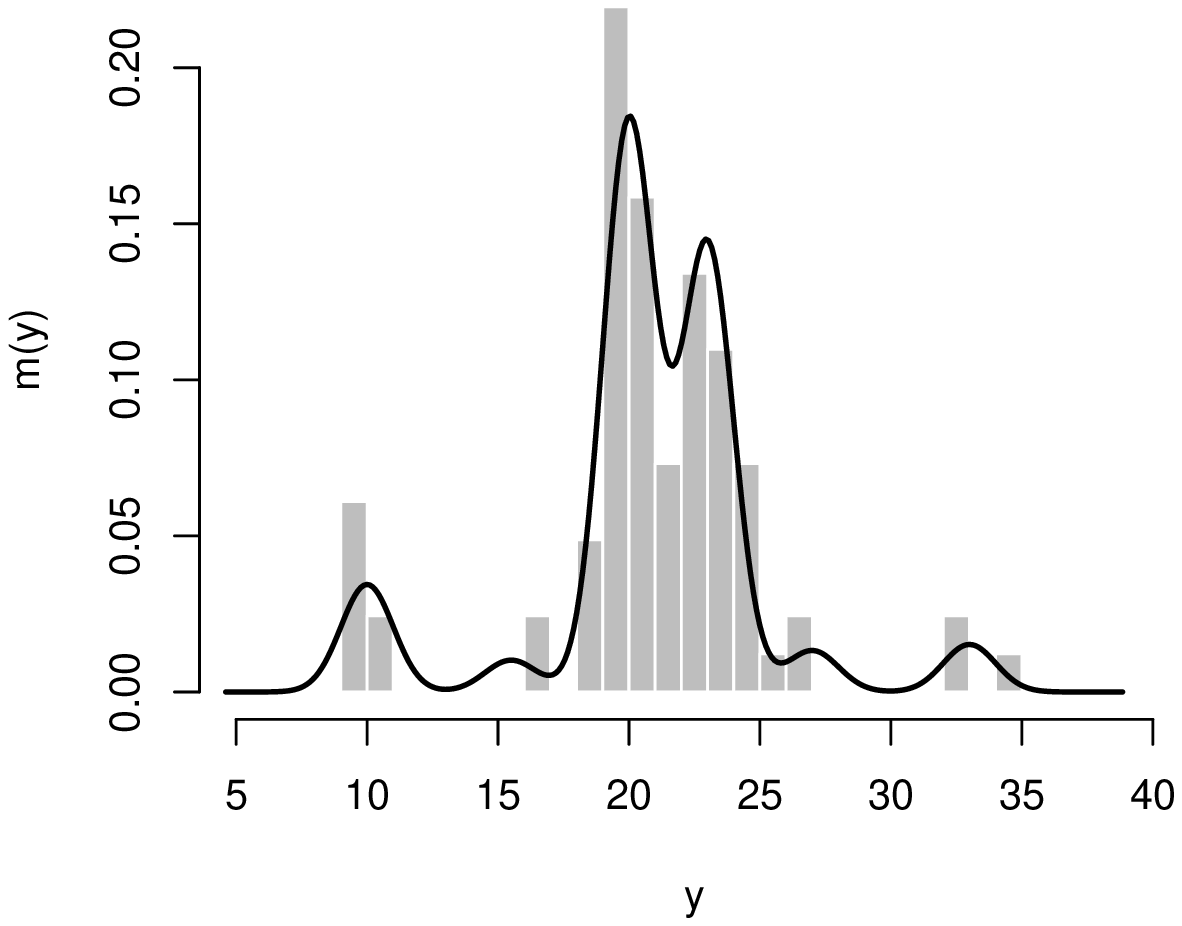}}}
\caption{Plots of the PR estimates for the galactic velocity data in Example~\ref{ex:galaxy1}.}
\label{fig:galaxy}
\end{center}
\end{figure}

\begin{ex}
\label{ex:poismix}
\citet[][Table~1]{karlis.xekalaki.2001} present data on the number of defaulted installments in a Spanish financial institution.  This data has a high number of zero counts, as well as substantial overdispersion.  This suggests a Poisson mixture model, and here we compare the PR-based estimates to others presented in the literature.  The first three rows of Table~\ref{table:poismix.est} show the estimates of $(f,U)$ for three methods in an zero-inflated Poisson mixture model.  These include an estimate based on the AIC penalty, the SCAD-based penalized likelihood approach of \citet{chen.khalili.2008}, and a minimum Hellinger distance method for count data \citep{woosriram2007}.  I start by bounding the support by $\Ubar = [0,30]$ and taking $\U$ to be a set of 100 equispaced points in $\Ubar$.  All but the Woo--Sriram estimates have five support points, including zero.  Besides this, we find that the corresponding estimates are quite similar.  An attractive feature of this method is that no special adjustments are needed for zero-inflation.  That is, zero-inflation can be achieved by simply including zero in the grid $\U$ and letting the data decide if a mass at zero is appropriate.  Fitted values were obtained for each of the four methods (not shown) and I find that, for small $y$-values, where the observed counts are relatively large, the PR-based estimate appears to provide a better overall fit compared to the others.  
\end{ex}

\begin{table}
\begin{center}
\begin{tabular}{c|cccccc}
\hline
Estimates & $(u_1, f(u_1))$ & $(u_2, f(u_2))$ & $(u_3, f(u_3))$ & $(u_4, f(u_4))$ & $(u_5, f(u_5))$ \\
\hline
AIC--BIC & (0, .314) & (.298, .435) & (4.37, .200) & (10.99, .048) & (26.51, .002) \\
MSCAD & (0, .328) & (.302, .417) & (4.19, .193) & (9.78, .055) & (20.01, .007) \\
WS & (0, .373) & (.36, .385) & (4.52, .199) & (11.26, .043) & \\
SASA & (0, .328) & (.303, .418) & (4.24, .201) & (10.91, .051) & (27.27, .002) \\
\hline
\end{tabular}
\end{center}
\caption{Estimates of $(f,U)$ for the financial data Poisson mixture in Example~\ref{ex:poismix}.  The first three rows are taken from \citet[][Table~10]{chen.khalili.2008}.}
\label{table:poismix.est}
\end{table}

\ifthenelse{1=1}{}{
\begin{table}
\begin{center}
\begin{tabular}{ccccc}
\hline
Defaults & Obs.~Freq. & MSCAD Exp.~Freq. & WS Exp.~Freq. & SASA Exp.~Freq. \\
\hline
0 & 3,002 & 2,998.6 & 3,019.9 & 3003.6 \\
1 & 502 & 494.4 & 499.6 & 496.6 \\
2 & 187 & 187.0 & 185.7 & 188.5 \\
3 & 138 & 177.0 & 166.9 & 179.6 \\
4 & 233 & 182.2 & 179.4 & 185.5 \\
5 & 160 & 158.5 & 163.8 & 160.5 \\
6 & 107 & 120.8 & 127.8 & 119.7 \\
7 & 80 & 86.5 & 89.6 & 82.3 \\
8 & 59 & 62.6 & 60.6 & 56.9 \\
9 & 53 & 48.1 & 42.9 & 42.9 \\
10 & 41 & 38.7 & 33.4 & 35.7 \\
11 & 28 & 31.4 & 28.1 & 31.1 \\
12 & 34 & 24.7 & 24.1 & 26.8 \\
13 & 10 & 18.7 & 20.0 & 22.0 \\
14 & 13 & 13.6 & 15.9 & 17.0 \\
15 & 11 & 9.7 & 11.8 & 12.3 \\
$\geq 16$ & 33 & 38.3 & 21.5 & 28.9 \\
\hline
\end{tabular}
\end{center}
\caption{Fitted values for the financial data Poisson mixture in Example~\ref{ex:poismix}.  The MSCAD and WS results are taken from \citet[][Table~11]{chen.khalili.2008}. }
\label{table:poismix.fit}
\end{table}
}


\section*{Acknowledgments}

The author thanks Professor Surya Tokdar for a number of helpful suggestions, and the Department of Mathematical Sciences, Indiana University--Purdue University Indianapolis, for their hospitality when a portion of this work was completed.

\appendix

\section{Convergence rates for stochastic approximation}
\label{S:chen}

Consider a stochastic approximation process $\{X_n: n \geq 0\}$ which, for fixed initial value $X_0 = x_0$, is defined recursively as follows:
\[ X_n = X_{n-1} + a_n \phi(X_{n-1}) + a_n Z_n, \quad n \geq 1. \] 
The process is designed so that $X_n \to x^\star$ almost surely, where $x^\star$ satisfies $\phi(x^\star) = 0$.  We shall assume that $\{X_n\}$ bounded; otherwise, some truncation or projection techniques are needed \citep{chen2002, kushner}.  The PR estimates $f_n$ are constrained to the simplex, so they satisfy this boundedness condition trivially.  Next are the main assumptions of the theorem.  

\begin{itemize}

\item[A1.] The weights $\{a_n\}$ satisfy $a_n > 0$, $a_n \to 0$, $\sum_n a_n = \infty$, and $a_{n+1}^{-1} - a_n^{-1} \to \alpha$ for some $\alpha \geq 0$.    
\vspace{-2mm}
\item[A2.] There exists a Lyapunov function $\ell(x)$ at the equilibrium point $x^\star$ of the ODE $dx_t/dt = \phi(x_t)$.  
\vspace{-2mm}
\item[A3.] $\sum_n a_n^{1-\delta} Z_n < \infty$ almost surely for some $\delta \in (0,1/2)$.  
\vspace{-2mm}
\item[A4.] $\phi(x)$ is continuously differentiable, and all eigenvalues of $J+\alpha\delta I$ have negative real parts, where $J = D\phi(x^\star)$.

\end{itemize}

\begin{chenthm}
Under A1--A4, $\|X_n-x^\star\| = o(a_n^\delta)$ almost surely.  
\end{chenthm}

\bibliographystyle{/Users/rgmartin/Research/TexStuff/asa}
\bibliography{/Users/rgmartin/Research/mybib}

\end{document}